\documentclass[reqno,12pt]{amsart}

\NeedsTeXFormat{LaTeX2e}[1994/12/01]

\textheight24.2cm
\textwidth16cm

\addtolength{\topmargin}{-50pt}
\addtolength{\oddsidemargin}{-1.8cm}
\addtolength{\evensidemargin}{-1.8cm}

\usepackage{amsmath}
\usepackage{amsfonts}
\usepackage{amssymb}
\usepackage{eufrak}
\usepackage{amscd}
\usepackage{amsthm}
\usepackage{amstext}
\usepackage[all]{xy}

\newcommand{\id}{\operatorname{id}}

\newcommand{\supp}{\operatorname{supp}}

   \theoremstyle{plain}
   \newtheorem{thm}{Theorem}[section]
   \newtheorem{prop}[thm]{Proposition}
   \newtheorem{lem}[thm]{Lemma}
   
   \theoremstyle{definition}

   \theoremstyle{remark}
   
   \newtheorem{remark}[thm]{Remark}

 \numberwithin{equation}{section}

\author{V. Manuilov}

\date{}

\address{Moscow State University,
Leninskie Gory 1, Moscow, 
119991, Russia}

\email{manuilov@mech.math.msu.su}


\title{On a family of representations of residually finite groups}

\sloppy

\begin{document}

\maketitle

\begin{abstract}
For a residually finite group $G$, its normal subgroups $G\supset G_1\supset G_2\cdots$ with $\cap_{n\in\mathbb N}G_n=\{e\}$ and for a growth function $\gamma$ we construct a unitary representation $\pi_\gamma$ of $G$. For the minimal growth, $\pi_\gamma$ is weakly equivalent to the regular representation, and for the maximal growth it is weakly equivalent to the direct sum of the quasiregular representations on the quotients $G/G_n$. In the case of intermediate growth we show two examples of different behaviour of $\pi_\gamma$.

\end{abstract}

\section*{Introduction}

Let $G$ be a residually finite group with normal subgroups $G\supset G_1\supset G_2\cdots$ such that $\cap_{n\in\mathbb N}G_n=\{e\}$. Let $X_n=G/G_n$ be the quotient finite group, with the natural action of $G$ denoted by $(g,x)\to gx$, $g\in G$, $x\in X_n$. Let $\nu_n=|X_n|$, where $|A|$ denotes the number of elements in the set $A$. A sequence $(\gamma_n)_{n\in\mathbb N}$ of integers such that $1\leq\gamma_n\leq \nu_n$ for any $n\in\mathbb N$ will be referred to as a {\it growth function}. 

It is well known that the regular representation $\lambda_G$ of $G$ is weakly contained in the direct sum of quasi-regular representations $\lambda_n$, which factorize through the quotients $G/G_n$, i.e. $\|\lambda_G(a)\|\leq\sup_{n\in\mathbb N}\|\lambda_n(a)\|$ for any $a\in \mathbb C[G]$. In view of abundance of exotic group norms \cite{Brown-Guentner,Wiersma}, we were interested to find intermediate norms between the norms $\|\lambda_G(\cdot)\|$ and $\sup_{n\in\mathbb N}\|\lambda_n(\cdot)\|$ (in the case of non-amenable $G$ --- otherwise these two norms coincide). We had not succeeded, but we have constructed a family $\pi_\gamma$, $\gamma\in Z$, of representations on Hilbert spaces $H_\gamma$, where $Z$ is a partially ordered set of growth functions, such that $\pi_\gamma$ is weakly equivalent to $\lambda$ for the minimal growth function ($\gamma_n=1$ for any $n\in\mathbb N$), and to $\oplus_{n\in\mathbb N}\lambda_n$ for the maximal growth function ($\gamma_n=\nu_n$ for any $n\in\mathbb N$). The most interesting case is that of intermediate growth. We prove an estimate on the norm of $\pi_\gamma$ when the growth function $\gamma$ is sufficiently slow. We also determine, in two cases, whether the trivial representation is weakly contained in $\pi_\gamma$. For a property $(\tau)$ group $G$ the answer is negative for all growth functions that are slower than the maximal one, but for a certain choice of finite index subgroups of a free group the answer is positive for some $\gamma$ of an intermediate growth. We were not able to check if the norms $\|\pi_\gamma(\cdot)\|$ are all different. We hope they are; if not, then there should be a boundary growth function such that slower growth functions give the regular norm, and faster ones give the norm $\sup_{n\in\mathbb N}\|\lambda_n(\cdot)\|$.

\section{Construction of a family of representations}

In this section we modify the Calkin's construction \cite{Calkin} 
to obtain representations of certain quotient $C^*$-algebras. 

Let $l^2(X_n)$, $n\in\mathbb N$, be the finitedimensional Hilbert spaces with
respect to the atomic measure $\mu$ given by $\mu(x)=1$ for any
$x\in X_n$. Set $\sigma=\oplus_{n\in\mathbb
N}\lambda_n$, and let $\mathcal A=C^*_{\sigma}(G)$ be the $C^*$-algebra generated by
all $\sigma(g)$, $g\in G$. Note that $\sigma$ is a representation of $G$ on the Hilbert space $\oplus_{n\in\mathbb N}l^2(X_n)$, and there is a canonical inclusion
$\mathcal A\subset\prod_{n\in\mathbb N}\mathbb B(l^2(X_n))\subset\mathbb B(\oplus_{n\in\mathbb N}l^2(X_n))$.

Fix some non-principal ultrafilter $\mathcal U$ on $\mathbb N$ and let $\tau_n$
be the trace on $\mathbb B(l^2(X_n))$ normalized by
$\tau_n(\id)=1$. Let $J\subset\prod_{n\in\mathbb N}\mathbb B(l^2(X_n))$ be the ideal consisting 
of all sequences $(m_n)_{n\in\mathbb N}$, $m_n\in\mathbb
B(l^2(X_n))$, such that $\lim_{\mathcal U}\tau_n(m_n^*m_n)=0$. Then
$I=J\cap \mathcal A$ is an ideal in $\mathcal A$.

Following Calkin, set 
$$
\widetilde{H}=\{\xi=(\xi_n)_{n\in\mathbb N}:\xi_n\in l^2(X_n),\
\sup_{n\in\mathbb N} \|\xi_n\|<\infty\}
$$ 
and define a sesquilinear
form on $\widetilde{H}$ by $\langle \xi,\eta\rangle=\lim_{\mathcal
U}\langle\xi_n,\eta_n\rangle$, where $\xi_n,\eta_n\in l^2(X_n)$,
$\xi=(\xi_n)_{n\in\mathbb N},\eta=(\eta_n)_{n\in\mathbb
N}\in\widetilde{H}$.

Set $\widetilde{H}_0=\{\xi\in\widetilde{H}:\lim_{\mathcal
U}\langle\xi,\xi\rangle=0\}\subset\widetilde{H}$.
When passing to the quotient linear space
$\widetilde{H}/\widetilde{H}_0$, the sesquilinear form becomes
positive-definite, hence defines an inner product. Completion of
$\widetilde{H}/\widetilde{H}_0$ with respect to the corresponding
norm is a (non-separable) Hilbert space $\widehat H$.

The sequence $\lambda_n$, $n\in\mathbb N$, defines a unitary representation $\widehat \lambda$ of the $C^*$-algebra
$A$ on $\widetilde H$ by $\widehat\lambda(g)(\xi_n)=(\lambda_n(g)\xi_n)$, and it is well-known due to Calkin \cite{Calkin} that
this representation annulates the ideal $I_0$ in $A$ consisting of
sequences $(m_n)_{n\in\mathbb N}$ that vanish at infinity, i.e.
$\lim_{n\to\infty}\|m_n\|=0$. 

This ideal is smaller than $I$. One of our aims is to find a subspace $H_1\subset \widehat H$ invariant under
$\widehat{\lambda}$ such that $\widehat{\lambda}|_{H_1}$ would annulate $I$.

Recall that $\nu_n=|X_n|$, and, by the definition, $\frac{\nu_{n+1}}{\nu_n}\geq 2$, so $\lim_{n\to\infty}\nu_n=\infty$. Let $Z$ denote the set of all non-decreasing integer-valued sequences $\gamma=(\gamma_n)_{n\in\mathbb N}$ such that $1\leq \gamma_n\leq \nu_n$ for any $n\in\mathbb N$. For $\gamma,\gamma'\in Z$, we write $\gamma\leq\gamma'$ if there is $\varepsilon>0$ such that $\varepsilon\gamma_n\leq\gamma'_n$ for any $n\in\mathbb N$, and $\gamma\sim\gamma'$ if $\gamma\leq\gamma'$ and $\gamma'\leq\gamma$. 
The set $Z$ contains the minimal element $\iota$ such that $\iota_n=1$ for any $n\in\mathbb N$, and the maximal element $\nu$ ($\nu_n=|X_n|$ for any $n\in\mathbb N$). We write $\gamma\prec\gamma'$ if $\lim_{\mathcal U}\frac{\gamma_n}{\gamma'_n}=0$.

Fix $\gamma\in Z$.
Let $\widetilde H^{(k)}_\gamma\subset\widetilde H$ be the subset of all sequences $(\xi_n)$ such that
$|\supp \xi_n|\leq k\gamma_n$ for each $n$. This is not a linear subspace, but $\widetilde H_\gamma=\cup_{k=1}^\infty \widetilde H^{(k)}_\gamma$ is. Indeed, if $(\xi_n)\in\widetilde H^{(k)}_\gamma$, $(\xi'_n)\in\widetilde H^{(k')}_\gamma$ then
$(\xi_n+\xi'_n)\in\widetilde H^{(k+k')}_\gamma$.

Let $H_\gamma$ denote the closure of $(\widetilde{H}^{(k)}_\gamma+\widetilde H_0)/\widetilde H_0\subset\widehat H$. Note that if $\gamma,\gamma'\in Z$, $\gamma\leq\gamma'$, then $H_\gamma\subset H_{\gamma'}$, hence $H_\gamma=H_{\gamma'}$ if $\gamma\sim\gamma'$. If $\gamma=\nu$ then $H_\nu=\widehat H$.

\section{Some properties of the Hilbert spaces $H_\gamma$}

\begin{lem}
Let $\gamma\prec\gamma'$. Then $H_\gamma\neq H_{\gamma'}$.
\end{lem}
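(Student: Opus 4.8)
The plan is to exhibit a single unit vector belonging to $H_{\gamma'}$ but at distance at least $1$ from $H_\gamma$; this already forces $H_\gamma\neq H_{\gamma'}$.

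For the test vector, for each $n$ choose a subset $S_n\subseteq X_n$ with $|S_n|=\gamma'_n$ (possible since $\gamma'_n\leq\nu_n=|X_n|$) and put $\xi_n=(\gamma'_n)^{-1/2}\mathbf 1_{S_n}\in l^2(X_n)$, so that $\|\xi_n\|=1$ for every $n$; the normalization is needed because $\gamma'$ may be unbounded. Then $\xi=(\xi_n)\in\widetilde H$, and $\xi\in\widetilde H^{(1)}_{\gamma'}$ because $|\supp\xi_n|=\gamma'_n$, so its class $[\xi]$ lies in $H_{\gamma'}$ with $\|[\xi]\|=\lim_{\mathcal U}\|\xi_n\|=1$.

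It remains to bound the distance from $[\xi]$ to the subspace $(\widetilde H_\gamma+\widetilde H_0)/\widetilde H_0$ whose closure is $H_\gamma$ (distance to a set equals distance to its closure, so a uniform positive lower bound suffices). Any element of this subspace is the class of $\eta+\zeta$ with $\zeta\in\widetilde H_0$ and $\eta\in\widetilde H_\gamma=\bigcup_k\widetilde H^{(k)}_\gamma$; the key point is that this union is increasing, so $\eta$ lies in a \emph{single} $\widetilde H^{(k)}_\gamma$, i.e. there is a fixed $k$ with $|\supp\eta_n|\leq k\gamma_n$ for all $n$. Let $P_n$ be the orthogonal projection in $l^2(X_n)$ onto the coordinates outside $\supp\eta_n$; then $P_n\eta_n=0$, so
\eQ{
\|\xi_n-\eta_n\|^2\ \geq\ \|P_n(\xi_n-\eta_n)\|^2\ =\ \|P_n\xi_n\|^2\ =\ \frac{|S_n\setminus\supp\eta_n|}{\gamma'_n}\ \geq\ \frac{\gamma'_n-k\gamma_n}{\gamma'_n}\ =\ 1-k\,\frac{\gamma_n}{\gamma'_n}.
}
Since $\lim_{\mathcal U}\|\zeta_n\|=0$, the ultralimits of $\|\xi_n-\eta_n-\zeta_n\|$ and $\|\xi_n-\eta_n\|$ coincide, whence
\eQ{
\bigl\|[\xi]-[\eta+\zeta]\bigr\|^2\ =\ \lim_{\mathcal U}\|\xi_n-\eta_n\|^2\ \geq\ 1-k\lim_{\mathcal U}\frac{\gamma_n}{\gamma'_n}\ =\ 1,
}
the last equality being exactly the hypothesis $\gamma\prec\gamma'$. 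Hence $[\xi]$ is at distance $\geq 1$ from the dense subspace of $H_\gamma$, so $[\xi]\notin H_\gamma$, and $H_\gamma\neq H_{\gamma'}$.

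The argument is short and contains no genuinely hard step; the one thing to watch is the bookkeeping around the definition of $H_\gamma$ as a union over $k$. One cannot expect an approximating vector with support bounded by $\gamma_n$ — only by $k\gamma_n$ for some constant $k$ — but since $k$ is fixed (independent of $n$) it is annihilated in the limit defining $\prec$, which is precisely why the relation $\prec$, rather than the coarser order $\leq$ on $Z$, is the right hypothesis. Note also that we never use an inclusion $H_\gamma\subseteq H_{\gamma'}$ (which need not hold): the bare statement $H_\gamma\neq H_{\gamma'}$ follows from producing one vector on the right outside the left.
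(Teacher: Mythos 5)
Your proof is correct and follows essentially the same route as the paper's: the same test vector $(\gamma'_n)^{-1/2}\mathbf 1_{S_n}$ with $|S_n|=\gamma'_n$, and the same support-counting inequality $\|\xi_n-\eta_n\|^2\geq(\gamma'_n-k\gamma_n)/\gamma'_n$ against an approximant from a single $\widetilde H^{(k)}_\gamma$. The only differences are presentational — you phrase it as a uniform distance bound ($\mathrm{dist}([\xi],H_\gamma)\geq 1$) rather than a contradiction for each fixed $\varepsilon$, and you are slightly more careful about absorbing the $\widetilde H_0$ component — but the argument is the same.
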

\begin{proof}
Assume the contrary, and let $E'_n\subset X_n$, $|E'_n|=\gamma'_n$. Take $\xi=(\xi_n)\in\widetilde{H}$, with $\xi_n=\frac{1}{\sqrt{\gamma'_n}}\chi_{E'_n}$, where $\chi_E$ denotes the characteristic function of the set $E$. Then, for any $\varepsilon\in(0,1)$ there exists $k\in\mathbb N$ and $\eta\in \widetilde H^{(k)}_{\gamma}$ such that $\|\xi-\eta\|<\varepsilon$. Let $E_n=\supp\eta_n$, $|E_n|\leq k\gamma_n$. Then
\begin{eqnarray*}
\frac{\gamma'_n-k\gamma_n}{\gamma'_n}&\leq& \frac{|E'_n\setminus E_n|}{\gamma'_n}=\sum_{x\in E'_n\setminus E_n}|\xi_n(x)|^2\\
&\leq&\sum_{x\in E'_n\setminus E_n}|\xi_n(x)|^2+\sum_{x\in E_n}|\xi_n(x)-\eta_n(x)|^2=\|\xi_n-\eta_n\|^2<\varepsilon^2, 
\end{eqnarray*}
hence $k\frac{\gamma_n}{\gamma'_n}>1-\varepsilon^2$. Then $0=k\lim_{\mathcal U}\frac{\gamma_n}{\gamma'_n}\geq 1-\varepsilon^2$, a contradiction with $\gamma\prec\gamma'$.

\end{proof}

\begin{remark}\label{Remark}
Note that the same argument shows that $H_\gamma$ is strictly greater than the closure of the union of all $H_{\gamma'}$ over all $\gamma'$ such that $\gamma'\prec\gamma$.

\end{remark}

\begin{lem}\label{orthogonal}
Let $\eta=(\eta_n)_{n\in\mathbb N}\in\widetilde H$. If $\eta\in \widetilde H_1$ and $\lim_{\mathcal U}\|\eta_n\|_\infty=0$ then $\eta\in\widetilde H_0$.

\end{lem}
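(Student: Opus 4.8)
The plan is to exploit the uniform bound on the sizes of the supports that membership in $\widetilde H_1$ provides. Since $\eta\in\widetilde H_1=\cup_{k=1}^\infty\widetilde H^{(k)}_\iota$ and $\iota_n=1$ for all $n$, there is a fixed $k\in\mathbb N$ with $|\supp\eta_n|\leq k$ for every $n$. This is the only structural input needed; everything else is an elementary estimate on $l^2(X_n)$ followed by passing to the ultralimit.

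First I would record the pointwise inequality. For a finitely supported vector $v\in l^2(X_n)$ one has
\[
\|v\|^2=\sum_{x\in\supp v}|v(x)|^2\leq |\supp v|\cdot\|v\|_\infty^2 .
\]
Applied to $v=\eta_n$ and using $|\supp\eta_n|\leq k$, this gives $\|\eta_n\|^2\leq k\,\|\eta_n\|_\infty^2$ for all $n$.

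Next I would pass to the limit along $\mathcal U$. Because $\eta\in\widetilde H$, the sequences $(\|\eta_n\|^2)_{n}$ and $(\|\eta_n\|_\infty^2)_n$ are bounded, so their ultralimits exist; by monotonicity and multiplicativity of $\lim_{\mathcal U}$ on bounded sequences, together with the hypothesis $\lim_{\mathcal U}\|\eta_n\|_\infty=0$, we get
\[
\langle\eta,\eta\rangle=\lim_{\mathcal U}\|\eta_n\|^2\leq k\Bigl(\lim_{\mathcal U}\|\eta_n\|_\infty\Bigr)^2=0,
\]
whence $\eta\in\widetilde H_0$ by definition of $\widetilde H_0$.

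There is no genuine obstacle in this argument; the only point to emphasize is that the uniform constant $k$ is doing all the work — it is precisely the feature that makes $\widetilde H_\gamma$ a \emph{linear} subspace (cf.\ the remark after the definition of $\widetilde H_\gamma$), and the conclusion would fail without such a bound, as the normalized characteristic functions $\frac{1}{\sqrt{|E_n|}}\chi_{E_n}$ of sets $E_n$ with $|E_n|\to\infty$ satisfy $\|\cdot\|_\infty\to 0$ yet $\|\cdot\|=1$.
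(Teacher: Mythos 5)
Your proof is correct and is essentially identical to the paper's: both reduce to a fixed $k$ with $|\supp\eta_n|\leq k$ coming from the definition of $\widetilde H_1$ as a union of the $\widetilde H^{(k)}_1$, apply the estimate $\|\eta_n\|^2\leq k\|\eta_n\|_\infty^2$, and pass to the ultralimit. Your closing remark about why the uniform bound is essential is a nice addition but does not change the argument.
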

\begin{proof}
It suffices to prove Lemma in the case when $\eta\in\widetilde H^{(k)}_1$ for some $k\in\mathbb N$. This means that $|\supp\eta_n|\leq k$ for any $n\in\mathbb N$. Then $\|\eta_n\|^2\leq k\|\eta_n\|_\infty^2$.

\end{proof}




Remark \ref{Remark} shows that the family of Hilbert spaces is not lower semicontinuous. Now let us show that it is upper semicontinuous. 

\begin{prop}
For any $\gamma^0\in Z$ one has $H_{\gamma^0}=\cap_{\gamma\succ\gamma^0}H_\gamma$.

\end{prop}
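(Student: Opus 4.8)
The plan is to prove the two inclusions separately; the nontrivial one is $\cap_{\gamma\succ\gamma^0}H_\gamma\subset H_{\gamma^0}$. The inclusion $H_{\gamma^0}\subset\cap_{\gamma\succ\gamma^0}H_\gamma$ is immediate: if $\gamma\succ\gamma^0$ then in particular $\gamma^0\leq\gamma$ (the condition $\lim_{\mathcal U}\gamma^0_n/\gamma_n=0$ forces $\varepsilon\gamma^0_n\leq\gamma_n$ for a suitable $\varepsilon$, since $\gamma^0_n/\gamma_n$ is bounded), so $H_{\gamma^0}\subset H_\gamma$ by the monotonicity noted in Section~1, and hence $H_{\gamma^0}$ lies in the intersection.

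For the reverse inclusion, the key idea is a diagonal/gliding-hump argument against the ultrafilter. Suppose $\zeta\in\cap_{\gamma\succ\gamma^0}H_\gamma$ but $\zeta\notin H_{\gamma^0}$; since $H_{\gamma^0}$ is closed, there is $\delta>0$ with $\operatorname{dist}(\zeta,H_{\gamma^0})>\delta$. First I would reduce to a representative $\xi=(\xi_n)\in\widetilde H$ of $\zeta$, and observe that membership of $\zeta$ in $H_{\gamma^0}$ can be tested by truncating each $\xi_n$ to its $k\gamma^0_n$ largest-modulus coordinates: writing $P^{(k)}_n$ for that truncation, $\zeta\in H_{\gamma^0}$ iff $\lim_{k\to\infty}\lim_{\mathcal U}\|\xi_n-P^{(k)}_n\xi_n\|^2=0$. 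So $\zeta\notin H_{\gamma^0}$ means there is $\delta>0$ such that for every $k$, the set $\{n:\|\xi_n-P^{(k)}_n\xi_n\|^2>\delta\}$ lies in $\mathcal U$.

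Now build a growth function $\gamma\succ\gamma^0$ that still captures $\zeta$, giving the contradiction. The construction is along a sequence $\mathcal U$-large index sets: pick $k_1<k_2<\cdots$ and $\mathcal U$-large sets $N_1\supset N_2\supset\cdots$ so that on $N_j$ one has both $\|\xi_n-P^{(k_j)}_n\xi_n\|^2>\delta$ and $k_j\gamma^0_n\leq\gamma_n$ will hold once we set, for $n$ in the "layer" $N_j\setminus N_{j+1}$, the value $\gamma_n:=k_j\gamma^0_n$ (capped by $\nu_n$, and adjusted upward to keep $\gamma$ non-decreasing — here one uses $\nu_{n+1}/\nu_n\geq 2$ and that the $k_j$ grow, exactly as in the monotonization steps already used in the paper). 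Since $k_j\to\infty$, this forces $\lim_{\mathcal U}\gamma^0_n/\gamma_n=0$, i.e. $\gamma\succ\gamma^0$; but by hypothesis $\zeta\in H_\gamma$, so $\xi$ is approximated within $\sqrt{\delta}$ by some $\eta\in\widetilde H^{(m)}_\gamma$, and for $n$ in a suitable $\mathcal U$-large set the support bound $|\operatorname{supp}\eta_n|\leq m\gamma_n=mk_j\gamma^0_n$ together with the argument from the proof of Lemma~2.1 (comparing $\xi_n$, its best $mk_j\gamma^0_n$-term truncation, and $\eta_n$) yields $\|\xi_n-P^{(mk_j)}_n\xi_n\|^2<\delta$ on a $\mathcal U$-large set — contradicting the choice of $\delta$ once $mk_j\geq$ the relevant $k_\ell$.

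The main obstacle is the bookkeeping in the construction of $\gamma$: it must simultaneously be non-decreasing, satisfy $1\leq\gamma_n\leq\nu_n$, dominate every multiple $k\gamma^0$ on a $\mathcal U$-large set (so that $\gamma\succ\gamma^0$), and yet not dominate so fast that $H_\gamma$ fails to contain $\zeta$ — but the last point is automatic from the hypothesis $\zeta\in\cap_{\gamma\succ\gamma^0}H_\gamma$, so really the delicate part is arranging the non-decreasing property while slaving $\gamma_n$ to $k_j\gamma^0_n$ on prescribed $\mathcal U$-large layers. This is handled by choosing the layers to be tails of a decreasing sequence of $\mathcal U$-large sets and using the growth $\nu_{n+1}/\nu_n\geq2$ to absorb the jumps, exactly the kind of estimate already exploited in Lemma~2.1 and in Section~1.
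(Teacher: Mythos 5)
Your easy inclusion is essentially fine, though the justification is slightly off: $\gamma\succ\gamma^0$ does \emph{not} force $\varepsilon\gamma^0_n\leq\gamma_n$ for all $n$ (the ratio $\gamma^0_n/\gamma_n$ is only controlled on a $\mathcal U$-large set and can be as large as $\nu_n$ off it). The inclusion $H_{\gamma^0}\subset H_\gamma$ still holds because classes in $\widehat H$ depend only on the behaviour of a sequence on a $\mathcal U$-large set, so one may replace a vector of $\widetilde H^{(k)}_{\gamma^0}$ by zero off the set where $\gamma^0_n\leq\gamma_n$. Your reformulation of membership in $H_{\gamma^0}$ via the best $k\gamma^0_n$-term truncations $P^{(k)}_n$ is also correct and is essentially the paper's quantity $\alpha^\varepsilon_n$ (the minimal support size of an $\varepsilon$-approximant) in different clothing.

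The hard inclusion, however, has a genuine gap, and it is exactly at the point you flag as ``bookkeeping.'' On the layer $N_j\setminus N_{j+1}$ you set $\gamma_n=k_j\gamma^0_n$, where $k_j$ is (by construction) a level at which the truncation error is still $>\delta$ for $n\in N_j$. The hypothesis $\zeta\in H_\gamma$ then produces an approximant $\eta\in\widetilde H^{(m)}_\gamma$ with an \emph{uncontrolled} constant $m$, giving $\|\xi_n-P^{(mk_j)}_n\xi_n\|^2<\delta$ on layer $j$. To contradict your lower bounds you need some $\ell$ with $k_\ell\geq mk_j$ \emph{and} $n\in N_\ell$; but a point of layer $j$ lies outside $N_{j+1}$, hence outside every $N_\ell$ with $\ell>j$, while every $\ell\leq j$ has $k_\ell\leq k_j<mk_j$ once $m>1$. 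So no contradiction arises. Concretely: if the minimal support needed to $\sqrt\delta$-approximate $\xi_n$ is about $k_{j(n)}\gamma^0_n$, your $\gamma$ is comparable to that minimal support size itself, so $\zeta\in H_\gamma$ with $m=2$, consistently with the hypothesis. The missing idea is that the test function $\gamma$ must be strictly \emph{between} $\gamma^0$ and the required support size $\alpha^\varepsilon_n$, in a way that absorbs the unknown multiplicative constant $m$. The paper achieves this with the geometric mean: assuming $\lim_{\mathcal U}\alpha^\varepsilon_n/\gamma^0_n=\infty$, it sets $\gamma'_n=\sqrt{\alpha^\varepsilon_n\gamma^0_n}$, so that $\gamma'\succ\gamma^0$, while the resulting bound $\alpha^\varepsilon_n\leq k\gamma'_n=k\sqrt{\alpha^\varepsilon_n\gamma^0_n}$ self-improves to $\alpha^\varepsilon_n\leq k^2\gamma^0_n$ --- the constant only gets squared, so it cannot fight the assumed divergence. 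Your layered diagonal cannot be repaired without introducing some such intermediate scale.
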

\begin{proof}
Let $\xi\in \cap_{\gamma\succ\gamma^0}H_\gamma$ be represented by a sequence $(\xi_n)_{n\in\mathbb N}$. For any $\varepsilon>0$ and for any $n\in\mathbb N$, consider all $\eta_n\in l^2(X_n)$ such that $\|\xi_n-\eta_n\|<\varepsilon$. Among all these $\eta_n$ one can find $\eta'_n$ with minimal possible $|\supp\eta_n|$. Set $\alpha^\varepsilon_n=|\supp\eta'_n|$.

Since $\xi\in \cap_{\gamma\succ\gamma^0}H_\gamma$, for any $\varepsilon>0$ and for any $\gamma\succ\gamma^0$ there exists $k\in\mathbb N$ and $\zeta\in \widetilde H^{(k)}_\gamma$ such that $\|\xi-\zeta\|<\varepsilon$. In particular, $|\supp\zeta_n|\leq k\gamma_n$ for all $n\in\mathbb N$. 

It follows from $\|\xi-\zeta\|<\varepsilon$ that there exists $\mathbb A\in\mathcal U$ sich that $\|\xi_n-\zeta_n\|<\varepsilon$ for any $n\in\mathbb A$. By assumption, $\alpha^\varepsilon_n\leq|\supp\zeta_n|\leq k\gamma_n$ for any $n\in\mathbb A$. Thus, for any $\varepsilon>0$ and any $\gamma\succ\gamma^0$ there exists $k\in\mathbb N$ and $\mathbb A\in\mathcal U$ such that $\alpha^\varepsilon_n\leq k\gamma_n$ for any $n\in\mathbb A$.  

Suppose that $\lim_{\mathcal U}\frac{\alpha^\varepsilon_n}{\gamma^0_n}=\infty$ for some $\varepsilon>0$.
Then set $\gamma'_n=\sqrt{\alpha^\varepsilon_n\gamma^0_n}$. By assumption, $\lim_{\mathcal U}\frac{\gamma'_n}{\gamma^0_n}=\lim_{\mathcal U}\sqrt{\frac{\alpha^\varepsilon_n}{\gamma^0_n}}=\infty$, hence $\gamma'\succ\gamma^0$. As we have already shown, there exists $k\in\mathbb N$ and $\mathbb A\in\mathcal U$ such that $\alpha^\varepsilon_n\leq k\gamma'_n=k\sqrt{\alpha^\varepsilon_n\gamma^0_n}$ for any $n\in\mathbb A$, hence $\alpha^\varepsilon_n\leq k^2\gamma^0_n$. This contradicts our assumption. 

Thus, for any $\varepsilon>0$ we have $\lim_{\mathcal U}\frac{\alpha^\varepsilon_n}{\gamma^0_n}<\infty$, i.e. for any $\varepsilon>0$ there exists $C\in\mathbb R$ and $\mathbb A\in\mathcal U$ such that $\alpha^\varepsilon_n\leq C\gamma^0_n$ for any $n\in\mathbb A$. Set $\eta_n=\left\lbrace\begin{array}{cl}\eta'_n&\mbox{if\ }n\in\mathbb A;\\0&\mbox{if\ }n\notin\mathbb A.\end{array}\right.$ 
Then $\eta=(\eta_n)_{n\in\mathbb N}\in H_{\gamma^0}$, and $\|\xi-\eta\|=\lim_{\mathcal U}\|\xi_n-\eta_n\|<\varepsilon$. As $\varepsilon$ is arbitrary, this implies that $\xi\in H_{\gamma^0}$ and we are done. 

\end{proof}

\begin{lem}\label{invariant}
The subspaces $H_\gamma$ are invariant under $\widehat{\lambda}$.
\end{lem}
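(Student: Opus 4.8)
The plan is to use the elementary fact that each $\lambda_n(g)$ acts on $l^2(X_n)$ by a permutation of the canonical orthonormal basis indexed by $X_n$: since $\lambda_n$ is the quasi-regular representation, $(\lambda_n(g)\xi_n)(x)=\xi_n(g^{-1}x)$ for $x\in X_n$, so $\supp(\lambda_n(g)\xi_n)=g\cdot\supp\xi_n$ and in particular $|\supp(\lambda_n(g)\xi_n)|=|\supp\xi_n|$. Thus the group action preserves the ``support size'' of a vector in $l^2(X_n)$, which is exactly the quantity cutting out the sets $\widetilde H^{(k)}_\gamma$.

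Granting this, here are the steps. Fix $g\in G$. If $\xi=(\xi_n)\in\widetilde H^{(k)}_\gamma$, so that $|\supp\xi_n|\leq k\gamma_n$ for every $n$, then $\widehat\lambda(g)\xi=(\lambda_n(g)\xi_n)$ again satisfies $|\supp(\lambda_n(g)\xi_n)|\leq k\gamma_n$ for every $n$, hence $\widehat\lambda(g)\xi\in\widetilde H^{(k)}_\gamma$; letting $k$ vary, $\widehat\lambda(g)$ maps $\widetilde H_\gamma=\cup_k\widetilde H^{(k)}_\gamma$ into itself. Next, since each $\lambda_n(g)$ is unitary we have $\langle\widehat\lambda(g)\xi,\widehat\lambda(g)\xi\rangle=\langle\xi,\xi\rangle$ for all $\xi\in\widetilde H$, so $\widehat\lambda(g)$ preserves $\widetilde H_0$ and descends to a well-defined isometry of $\widetilde H/\widetilde H_0$ that extends to a unitary, hence continuous, operator on $\widehat H$ --- this is just the statement, already used above, that $\widehat\lambda$ is a unitary representation. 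Combining the two points: $\widehat\lambda(g)$ is continuous on $\widehat H$ and carries the image of $\widetilde H_\gamma$ in $\widehat H$ into itself, therefore it carries the closure $H_\gamma$ of that image into $H_\gamma$. Applying this also to $g^{-1}$ yields $\widehat\lambda(g)H_\gamma=H_\gamma$ for every $g\in G$, and since $H_\gamma$ is a closed subspace it is then automatically stable under linear combinations, adjoints (as $\widehat\lambda(g)^*=\widehat\lambda(g^{-1})$) and norm limits, i.e. under all of $\widehat\lambda$.

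I do not expect a genuine obstacle here. The only things to be careful about are the (trivial) observation that a basis permutation does not change the cardinality of the support of a vector, and the bookkeeping that although $\widetilde H^{(k)}_\gamma$ is not a linear subspace, it is mapped into itself for each $k$, so that the linear span $\widetilde H_\gamma$ and then its closure $H_\gamma$ inherit invariance by continuity of $\widehat\lambda(g)$.
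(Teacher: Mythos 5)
Your proof is correct and rests on exactly the same observation as the paper's one-line argument: translation by $g$ permutes the canonical basis of $l^2(X_n)$ and hence preserves support sizes, so each $\widetilde H^{(k)}_\gamma$ is preserved and invariance of the closure $H_\gamma$ follows from continuity of the unitary $\widehat\lambda(g)$. You have simply spelled out the bookkeeping that the paper dismisses as obvious.
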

\begin{proof}
Obvious: translation by $g\in G$, of functions in $l^2(X_n)$ does not change the size of their supports.
\end{proof}

Thus, we can restrict the representation $\widehat\lambda$ to $H_\gamma$ for any $\gamma\in Z$. Denote this restriction by $\pi_\gamma=\widehat\lambda|_{H_\gamma}$. 

Note that if $\gamma\leq\gamma'$ then $H_\gamma\subset H_{\gamma'}$, hence the representation $\pi_{\gamma'}$ contains the representation $\pi_\gamma$.

\section{Case of maximal growth}

\begin{thm}
The representations $\pi_\nu$ and $\oplus_{n\in\mathbb N}\lambda_n$ are weakly equivalent. 

\end{thm}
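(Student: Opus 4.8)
The plan is to show the two representations are weakly equivalent by establishing that $\|\pi_\nu(a)\| = \|(\oplus_{n\in\mathbb N}\lambda_n)(a)\|$ for every $a\in\mathbb C[G]$ (equivalently for $a\in\mathcal A$). One inequality is easy: since $H_\nu=\widehat H$ is, by construction, a ``quotient by an ultrafilter'' of the representation space $\oplus_{n\in\mathbb N}l^2(X_n)$, the representation $\widehat\lambda$ is a subquotient of $\sigma=\oplus_{n\in\mathbb N}\lambda_n$ in the $C^*$-algebraic sense, so $\|\pi_\nu(a)\|\le\|\sigma(a)\|$ for all $a$. (Concretely, for $\xi=(\xi_n)+\widetilde H_0\in\widehat H$ with $\|\xi\|\le 1$ one has $\limsup_{\mathcal U}\|\xi_n\|\le 1$ after rescaling, and $\|\widehat\lambda(a)\xi\|^2=\lim_{\mathcal U}\|\lambda_n(a)\xi_n\|^2\le\|\sigma(a)\|^2$.)

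For the reverse inequality $\|\sigma(a)\|\le\|\pi_\nu(a)\|$, fix $a\in\mathbb C[G]$ and $\delta>0$. Since $\sigma=\oplus_n\lambda_n$, there is some index $m$ and a unit vector $\zeta\in l^2(X_m)$ with $\|\lambda_m(a)\zeta\|>\|\sigma(a)\|-\delta$. The point to exploit is that $a$, being a finite sum $\sum_g c_g g$, acts on $l^2(X_n)$ ``in the same way'' for all large $n$, because $\cap_n G_n=\{e\}$ forces the relations among the finitely many group elements appearing in $a$ to stabilize: there is an $N$ such that for $n\ge N$ the map $X_m\to X_n$ is compatible enough that a copy of $\zeta$ can be transported to a unit vector $\zeta^{(n)}\in l^2(X_n)$ with $\|\lambda_n(a)\zeta^{(n)}\|>\|\sigma(a)\|-\delta$. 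I would make this precise by choosing $n$ so that the natural surjection $G/G_n\to G/G_m$ is ``injective on the ball'' containing $\supp(\zeta)$ and the supports generated by the letters of $a$; pulling $\zeta$ back along a section and then translating by all of $G_m/G_n$-cosets, or simply using that $\lambda_n$ restricted to an appropriate invariant subspace is a multiple of $\lambda_m$, yields $\zeta^{(n)}$. Then the sequence $\xi=(\xi_n)$ with $\xi_n=\zeta^{(n)}$ for $n\ge N$ (and $0$ otherwise) lies in $\widetilde H=\widetilde H_\nu$, represents a unit vector in $H_\nu=\widehat H$, and satisfies $\|\pi_\nu(a)\xi\|^2=\lim_{\mathcal U}\|\lambda_n(a)\xi_n\|^2\ge(\|\sigma(a)\|-\delta)^2$. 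Letting $\delta\to 0$ gives $\|\pi_\nu(a)\|\ge\|\sigma(a)\|$.

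Combining the two inequalities yields $\|\pi_\nu(a)\|=\|(\oplus_n\lambda_n)(a)\|$ for all $a\in\mathbb C[G]$, which is precisely weak equivalence of $\pi_\nu$ and $\oplus_{n\in\mathbb N}\lambda_n$.

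The step I expect to be the main obstacle is the transport of the near-maximal vector $\zeta\in l^2(X_m)$ into $l^2(X_n)$ for $\mathcal U$-many $n$ while \emph{preserving the norm of $\lambda(a)$ applied to it}. The subtlety is that $\lambda_n(a)$ and $\lambda_m(a)$ need not be unitarily equivalent for any single pair $n>m$; what saves us is that $G/G_m$ is a quotient of $G/G_n$, so $\lambda_m$ is a subrepresentation of $\lambda_n$ (the quasi-regular representation of a group contains the quasi-regular representation of every quotient, pulled back), and hence $\|\lambda_n(a)\|\ge\|\lambda_m(a)\|$ with the norm attained on the corresponding invariant subspace. Making sure the chosen vectors $\xi_n$ stay uniformly bounded (so that $\xi\in\widetilde H$) and that the representing sequence is genuinely not in $\widetilde H_0$ (so $\|\xi\|=1$, not $0$) requires a little care with the normalization, but is routine once the subrepresentation picture is in place.
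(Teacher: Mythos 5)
Your proposal is correct, and the key fact it rests on --- that $\lambda_m$ is a subrepresentation of $\lambda_n$ for $n\geq m$, via the normalized pull-back isometry along $X_n\to X_m$ (the same isometry $U_{n-1}$ the paper introduces in Section 5) --- is exactly the fact the paper uses when it says ``each $\lambda_n$ repeats infinitely in $\sigma$.'' The routes differ in how that fact is deployed. The paper first invokes Calkin's theorem to identify $\|\widehat\lambda(a)\|$ with the norm of $\sigma(a)$ in $\mathcal A/(\mathcal A\cap\mathbb K(\oplus_n l^2(X_n)))$, and then observes that since every $\lambda_n$ occurs infinitely often in $\sigma$, passing to the quotient by the compacts does not lower the norm. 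You instead argue both inequalities directly on vectors: the upper bound $\|\pi_\nu(a)\|\leq\|\sigma(a)\|$ by the obvious ultralimit estimate on the dense subspace $\widetilde H/\widetilde H_0$, and the lower bound by transporting a near-maximizing unit vector $\zeta\in l^2(X_m)$ for $\lambda_m(a)$ into $l^2(X_n)$ for all $n\geq m$ and noting that the resulting sequence survives in $\widehat H$ (it has ultralimit norm $1$ because $\mathcal U$ is non-principal and hence contains the cofinite set $\{n\geq m\}$). This is more self-contained --- it avoids citing Calkin's kernel theorem altogether --- at the cost of redoing by hand what that theorem packages. One remark: the digression in your second paragraph about choosing $n$ so that $G/G_n\to G/G_m$ is ``injective on a ball'' is unnecessary and somewhat misleading (no such condition is needed); the clean mechanism is the one you state immediately afterwards, namely that the image of the isometry $l^2(X_m)\to l^2(X_n)$ is a $\lambda_n$-invariant subspace on which $\lambda_n$ acts as $\lambda_m$, so $\|\lambda_n(a)\zeta^{(n)}\|=\|\lambda_m(a)\zeta\|$ exactly, for every $n\geq m$.
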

\begin{proof}
When $\gamma=\nu$ is maximal, there is no restrictions on the size of supports of vectors in $\widetilde H$, so $\pi_\nu=\widehat\lambda$.
The classical result of J. W. Calkin \cite{Calkin} states that the kernel of the representation $\widehat\lambda$ of $\mathcal A$ on $\widehat H$ is $\mathcal A\cap\mathbb K(\oplus_{n\in\mathbb N}l^2(X_n))$ (recall that $\mathcal A$ is generated by $\oplus_{n\in\mathbb N}\lambda_n(g)$, $g\in G$). As $\lambda_n$ is a subrepresentation in any $\lambda_m$ with $m>n$, each $\lambda_n$ repeats infinitely in $\sigma=\oplus_{n\in\mathbb N}\lambda_n$, hence $\|\widehat\lambda(\cdot)\|=\sup_{n\in\mathbb N}\|\lambda_n(\cdot)\|$.

\end{proof}

\section{Case of very slow growth}

\begin{thm}
There exists a growth function $\gamma$ with $\lim_{n\to\infty}\gamma_n=\infty$ such that $\pi_{\gamma'}$ is weakly equivalent to the regular representation $\lambda$ of $G$ for any $\gamma'\leq\gamma$. In particular, $\pi_\iota$ is weakly equivalent to $\lambda$.

\end{thm}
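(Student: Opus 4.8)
The plan is to show that if $\gamma$ grows slowly enough, then the representation $\pi_\gamma = \widehat\lambda|_{H_\gamma}$ has the same norm as $\lambda$ on $\mathbb C[G]$; since $\lambda$ is always weakly contained in $\pi_\gamma$ (because $\pi_\gamma$ contains the regular-like behaviour — more precisely, one checks $\lambda \prec \widehat\lambda|_{H_\gamma}$ always, or at worst one proves both inequalities), it suffices to prove $\|\pi_\gamma(a)\| \le \|\lambda(a)\|$ for all $a \in \mathbb C[G]$. Fix a finitely supported $a = \sum_{g} a_g g$. A vector $\xi \in \widetilde H_\gamma^{(k)}$ is represented by a sequence $(\xi_n)$ with $|\supp \xi_n| \le k\gamma_n$. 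The key point is that $\lambda_n(g)$ moves $\supp\xi_n$ by left translation, so $\widehat\lambda(a)\xi$ is represented by $(\lambda_n(a)\xi_n)$ with $|\supp \lambda_n(a)\xi_n| \le |\supp a|\cdot k\gamma_n$, staying within a controlled multiple of $\gamma_n$. The idea is that on such ``small-support'' vectors in the finite group $X_n = G/G_n$, the operator $\lambda_n(a)$ behaves, as $n\to\infty$ along $\mathcal U$, like the operator $\lambda(a)$ on $\ell^2(G)$, provided the support of $\xi_n$ ``looks locally like a subset of $G$'' — i.e. provided the quotient map $G \to X_n$ is injective on the relevant translates of $\supp\xi_n$ by words in $\supp a$.

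First I would make precise the local isomorphism: for each finite $F \subset G$ (here $F$ will be a ball containing powers of elements of $\supp a$), since $\cap_n G_n = \{e\}$, for all large $n$ the map $G \to X_n$ is injective on $F$. So for a vector $\xi_n$ whose support is a \emph{single} translate of a small set — or, by linearity and the $\varepsilon/k$-splitting, a bounded union of such — the computation of $\langle \lambda_n(a)\xi_n, \xi_n\rangle$ agrees with the corresponding computation in $\ell^2(G)$ up to boundary terms. The difficulty is that ``support of size $\le k\gamma_n$'' does \emph{not} mean the support is geometrically small or connected; it could be $k\gamma_n$ scattered points. The strategy to handle this is to \textbf{choose $\gamma$ by a diagonal/recursive construction}: enumerate a sequence of finite subsets $F_1 \subset F_2 \subset \cdots$ of $G$ with $\cup F_j = G$, and pick $\gamma_n$ growing so slowly that for each $n$, any subset of $X_n$ of size $\le k\gamma_n$ (for the relevant range of $k$) is ``$F_{j(n)}$-separated'' after pulling back — more honestly, so that one can estimate $\|\lambda_n(a)\xi_n\|$ by partitioning $\supp \xi_n$ into pieces each of which maps isomorphically to a translate in $G$, with the number of ``bad'' adjacency pairs between pieces negligible relative to $\|\xi_n\|^2$. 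This is where the slowness of $\gamma$ and the condition $\nu_{n+1}/\nu_n \ge 2$ (giving $\nu_n$ growing, hence room to choose $\gamma_n \ll \nu_n$) get used.

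The cleanest route, which I expect the author takes, is: reduce to showing that for the \emph{chosen} slow $\gamma$, every vector in $H_\gamma$ can be approximated by vectors $\xi$ such that the sequence $(\xi_n)$ is, along $\mathcal U$, a ``limit of translates'' — formally, that $H_\gamma$ decomposes (or embeds) into a direct integral / direct sum of copies of the regular representation. One shows that for such a $\gamma$, a unit vector of the form $\xi_n = \frac{1}{\sqrt{\gamma_n}}\chi_{E_n}$ with $E_n$ a union of $\gamma_n$ ``well-separated'' points behaves like an orthonormal family on which $G$ acts by translation, yielding a copy of $\lambda$; and, crucially, that the slowness of $\gamma$ forces \emph{every} small-support configuration to be, up to small error, of this well-separated form, because a set of size $k\gamma_n$ in a group of size $\nu_n \gg \gamma_n$ can be thinned to a set on which a large fixed ball acts freely after quotienting, losing only a $\mathcal U$-null proportion of mass. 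I would then conclude $\|\pi_\gamma(a)\| \le \limsup_{\mathcal U}\|\lambda_n(a)|_{\text{small supports}}\| \le \|\lambda(a)\|$, and finally note that $\gamma' \le \gamma$ gives $H_{\gamma'} \subset H_\gamma$, so $\pi_{\gamma'}$ is a subrepresentation of $\pi_\gamma$ and hence also weakly equivalent to $\lambda$; the case $\gamma' = \iota$ is the special case $\gamma'_n \equiv 1$. The main obstacle, and the part requiring genuine care, is precisely the passage from ``bounded support size'' to ``locally free configuration'': controlling the boundary/overlap terms uniformly in $k$ and showing they vanish along $\mathcal U$ once $\gamma$ is chosen slowly enough — this is the heart of the argument and where the recursive choice of $\gamma$ (depending on an exhaustion of $G$ and on how fast the injectivity radius of $G \to X_n$ grows) must be set up with care.
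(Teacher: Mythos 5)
Your reduction of the theorem to the inequality $\|\pi_\gamma(a)\|\le\|\lambda(a)\|$ for slowly growing $\gamma$, and your identification of the two relevant ingredients (the injectivity radius of $q_n:G\to X_n$ and the fact that a support of size $k\gamma_n$ may be scattered) match the paper. But the mechanism you propose for the key step is the part that would fail, and you have left it as an acknowledged obstacle rather than resolved it. You suggest that a set of size $k\gamma_n$ in $X_n$ "can be thinned to a set on which a large fixed ball acts freely after quotienting, losing only a $\mathcal U$-null proportion of mass." This is false in general: the norming vector $\xi_n$ could have all of its mass concentrated on a single connected clump whose diameter grows with $n$, and no thinning to a well-separated configuration retains more than a negligible fraction of the mass. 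Also, the cardinality comparison $\gamma_n\ll\nu_n$ that you invoke is not the relevant smallness condition; what matters is the comparison of $\gamma_n$ with the injectivity radius $\alpha_n=\min_{g\in G_n,\,g\neq e}l(g)$.

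The paper's actual argument avoids thinning entirely. For $a$ supported in the ball $B_r$, it partitions $\supp\xi_n$ into clusters $A_1\sqcup\cdots\sqcup A_k$ that are pairwise $3r$-separated, with each cluster $3r$-chained internally. Since $\supp(a^*a)\subset B_{2r}$, the cross terms $\langle\lambda_n(a^*a)\eta_i,\eta_j\rangle$ vanish for $i\neq j$, so $\|\lambda_n(a)\xi_n\|^2$ splits as a sum over clusters, and a pigeonhole argument produces \emph{one} cluster $\eta_i$ with $\langle\lambda_n(a^*a)\eta_i,\eta_i\rangle>(\|\lambda_n(a^*a)\|-\varepsilon)\|\eta_i\|^2$. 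The decisive observation is that a $3r$-chained set of at most $k\gamma_n$ points has diameter at most $3rk\gamma_n$, so this single cluster sits in a ball of radius $r'=3rk\gamma_n$; choosing $\gamma$ with $\lim_{\mathcal U}\gamma_n/\alpha_n=0$ makes $r'+r<\alpha_n/4$, whence $q_n$ restricts to an isometry on the relevant ball and the cluster lifts to $l^2(G)$ intertwining $\lambda_n(a)$ with $\lambda(a)$. That cardinality-implies-diameter step, combined with selecting one norming cluster rather than approximating the whole vector, is the missing idea in your proposal; without it, the passage from "bounded support size" to "locally liftable configuration" that you correctly flag as the heart of the argument does not go through.
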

\begin{proof}

It suffices to prove the theorem in the case when $G$ is finitely generated.
Let $q_n:G\to G/G_n=X_n$ denote the quotient maps, and let $l$ (resp. $l_n$) be the word length function on $G$ (resp. on $X_n$) with respect to a fixed set $S\subset G$ (resp. $q_n(S)\subset X_n$) of generators, and let $d(g,h)=l(g^{-1}h$ (resp. $d_n(x,y)=l_n(x^{-1}y)$) determine a left invariant metric on $G$ (resp. on $X_n$).   
Let $B_r\subset G$ denote the ball of radius $r$.
Let $a\in\mathbb C[G]$, $\supp a\subset B_r$. For any $\gamma\in Z$ and for any $\varepsilon>0$ there exists $\xi\in\widetilde H^{(k)}_\gamma$ such that $\|\xi_n\|=1$ for any $n\in\mathbb N$ and 
\begin{equation}\label{norme}
\|\pi_\gamma(a)\|^2<\|\pi_\gamma(a)\xi\|^2+\varepsilon.
\end{equation}

Let $\supp\xi_n=A\subset X_n$. By assumption, $|A|\leq k\gamma_n$ for any $n\in\mathbb N$. Our aim is to show that although $A$ may be scattered on $X_n$, one can replace $\xi_n$ by another functions $\zeta_n$ such that $\|\pi_\gamma(a)\|^2<\|\pi_\gamma(a)\zeta\|^2+\varepsilon$, $\|\zeta_n\|=1$, $n\in\mathbb N$, and such that $\supp\zeta_n$ lies in a ball of a controlled radius. 

Decompose the set $A$ into a disjoint union of its subsets $A=A_1\sqcup\cdots\sqcup A_k$ such that
\begin{itemize}
\item[(A1)]
For any $x\in A_i$, $i=1,\ldots, k$, there exists $y\in A_i$ such that $\rho(x,y)<3r$;
\item[(A2)]
If $x\in A_i$, $y\in A_j$, $i\neq j$, then $\rho(x,y)\geq 3r$.
\end{itemize}

Set $\xi_n|_{A_i}=\eta_i$. Then $\|\eta_1\|^2+\cdots+\|\eta_k\|^2=\|\xi_n\|^2=1$ and, as $\supp x^*x\subset B_{2r}$, so by (A2), one has $\langle\lambda_n(a^*a)\eta_i,\eta_j\rangle=0$ when $i\neq j$, hence
$$
\|\lambda_n(a)\xi_n\|^2=\sum_{i=1}^k\langle\lambda_n(a^*a)\eta_i,\eta_i\rangle.
$$
Suppose that for any $i=1,\ldots,k$ one has 
\begin{equation}\label{assum}
\|\lambda_n(a^*a)\eta_i,\eta_i\|\leq (\|\lambda_n(a^*a)\|-\varepsilon)\|\eta_i\|^2.
\end{equation}
Then, summing (\ref{assum}) up, we get 
$$
\|\lambda_n(a)\xi_n\|^2=\sum_{i=1}^k\|\lambda_n(a^*a)\eta_i,\eta_i\|\leq(\|\lambda_n(a^*a)\|-\varepsilon)\sum_{i=1}^k\|\eta_i\|^2=\|\lambda_n(a^*a)\|-\varepsilon.
$$
Passing to the limit over $\mathcal U$, we get
$$
\|\pi_\gamma(a)\xi\|^2\leq\|\pi_\gamma(a)\|^2-\varepsilon,
$$
which contradicts (\ref{norme}). Thus, for any $n\in\mathbb N$, there exists $i$ such that 
$$
\|\lambda_n(a^*a)\eta_i,\eta_i\|\leq(\|\lambda_n(a^*a)\|-\varepsilon)\|\eta_i\|^2.
$$ 

Note that $|\supp\eta_i|=|A_i|\leq|A|\leq k\gamma_n$, hence, by (A1), there exists a ball $\widetilde B_{r'}(z_n)$ of radius $r'=k\gamma_n\cdot 3r$ centered at some $z_n\in X_n$, such that $\supp\eta_i\subset \widetilde B_{r'}(z_n)$ (we use tilde to distinguish balls in $X_n$ from balls in $G$).  

Set $\zeta_n=\frac{\eta_i}{\|\eta_i\|}$. Then $\supp\zeta_n\subset \widetilde B_{r'}(z_n)$, $|\supp\zeta_n|\leq k\gamma_n$ for any $n\in\mathbb N$, and there exists $\mathbb A\in\mathcal U$ such that $\|\lambda_n(a)\zeta_n\|>\|\lambda_n(a)\|-\varepsilon$ for any $n\in\mathbb A$.

Note that $y\in \widetilde B_{r'}(z_n)$ if $l_n(z_n^{-1}y)\leq r'$. 

Let $\alpha_n=\min_{g\in G_n,g\neq e}l(g)$. It follows from $\cap_{n=1}^\infty G_n=\{e\}$ that $\lim_{n\to\infty}\alpha_n=\infty$.
The following statement is folklore.
\begin{lem}
Let $g\in G$, $z=q_n(g)$, and let $B_{R}(g)\subset G$ be the ball of radius $R$ centered at $g$. Then $q_n$ maps $B_R(g)$ isometrically onto $\widetilde B_R(z)$ for any $R<\alpha_n/4$.

\end{lem}
\begin{proof}
Let $h,k\in B_R(g)$. Then $d(h,k)=l(h^{-1}k)\leq 2R$. Note that for any $x\in G$, $l(x)\geq l(q_n(x))$, and $l_n(q_n(x))=\min_{y\in G_n,y\neq e}l(xy)$. Suppose that there exists $y\in G_n$ such that $l(xy)<l(x)$. Then, by the triangle inequality, $l(xy)\geq l(y)-l(x)$, hence $l(y)-l(x)<l(x)$, or, equivalently, $l(x)>l(y)/2$. Taking $x=h^{-1}k$, we get $2R\geq d(h,k)=l(x)>l(y)/2\geq \alpha_n/2$, hence $R\geq \alpha_n/4$ --- a contradiction. Thus, $d_n(q_n(h),q_n(k))=l_n(q_n(x))=l(x)=d(h,k)$.   

\end{proof}

Let $g_n\in G$ satisfy $q_n(g_n)=z_n$, $R<\alpha_n/4$, and let $u_n:l^2(\widetilde B_R(z_n))\to l^2(G)$ be an isometry defined by 
$$
u_n(\varphi_n)(h)=\left\lbrace\begin{array}{cl}\varphi_n(q_n(h))&\mbox{\ if\ }h\in \widetilde B_R(g_n);\\0&\mbox{\ if\ }h\notin \widetilde B_R(g_n).\end{array}\right.
$$
If the supports of $\varphi_n$ and of $\lambda_n(g)\varphi_n$ lie in $\widetilde B_R(z_n)$ then $u_n\lambda_n(g)\varphi_n=\lambda(g)u_n\varphi_n$. Both $\zeta_n$ and $\lambda_n(x)\zeta_n$ have supports in the ball $\widetilde B_{r'+r}(z_n)$, hence, for $r'+r<\alpha_n/4$ one has $u_n\lambda_n(x)\zeta_n=\lambda(x)u_n\zeta_n$, so $\|\lambda_n(x)\zeta_n\|=\|\lambda(x)\zeta'_n\|$, where $\zeta'_n=u_n\zeta_n\in l^2(G)$.  

Thus, 
\begin{equation}\label{es1}
\|\pi_\gamma(a)\|^2<\|\lambda_n(a)\zeta_n\|^2+\varepsilon=\|\lambda(a)\zeta'_n\|^2+\varepsilon
\end{equation} 
for any $n\in\mathbb A$ when $r'+r<\alpha_n/4$. 

For any $\varepsilon>0$, there exists $\mathbb A'\in\mathcal U$ such that 
\begin{equation}\label{es2}
\|\lambda(a)\zeta'_n\|^2<\|\lambda(a)\|^2+\varepsilon
\end{equation}
for any $n\in\mathbb A'$. 

Choose $\gamma$ such that
$\lim_{\mathcal U}\frac{\gamma_n}{\alpha_n}=0$. Then, as $r'=3rk\gamma_n$, so there exists $\mathbb A''\in\mathcal U$ such that $r'+r< \alpha_n/4$ holds for any $n\in\mathbb A''$. As the set $\mathbb A\cap\mathbb A'\cap\mathbb A''$ is not empty, it follows from (\ref{es1}) and (\ref{es2}) that 
$$
\|\pi_\gamma(a)\|^2<\|\lambda(a)\|^2+2\varepsilon
$$
holds for any $a\in\mathbb C[G]$ with $\supp a\in B_r$. As $r$ is arbitrary, we conclude that $\|\pi_\gamma(a)\|\leq\|\lambda(a)\|$ for any $a\in\mathbb C[G]$.

\end{proof}

\section{Case of small growth}

Let $U_{n-1}:l^2(X_{n-1})\to l^2(X_n)$ be the isometry defined by 
$$
U_{n-1}(\xi_{n-1})(x)=\frac{1}{\sqrt{\nu_n/\nu_{n-1}}}\xi_{n-1}(q_n(x)), 
$$
where $q_n:G/G_n\to G/G_{n-1}$ is the quotient homomorphism and $\nu_n=|X_n|$. This allows to consider $H_{n-1}$ as a subspace of $H_n$. Note that $\lambda_{n-1}$ is a subrepresentation of $\lambda_n$.
Set $\rho_n=\lambda_n\ominus\lambda_{n-1}$. This is a representation of $G$ on $l^2(X_n)\ominus l^2(X_{n-1})$.

\begin{thm}
Let $\gamma\prec\gamma'$, where $\gamma'_n=\frac{\nu_n}{\nu_{n-1}}$. Then $\|\pi_\gamma(a)\|\leq \lim\sup_{n\to\infty}\|\rho_n(a)\|$ for any $a\in\mathbb C[G]$.

\end{thm}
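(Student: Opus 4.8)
The plan is to exploit the density in $H_\gamma$ of the classes of sequences with controlled support rate, and to show that each such class coincides, modulo $\widetilde H_0$, with one that lies fibrewise in the orthocomplement of $\lambda_{n-1}$ inside $\lambda_n$, where $\widehat\lambda$ acts through $\rho_n$. Since $\widehat\lambda(a)$, and hence $\pi_\gamma(a)$, is bounded (by $\|a\|_{\mathcal A}=\sup_n\|\lambda_n(a)\|$), it is enough to estimate $\|\pi_\gamma(a)\xi\|$ for $\xi\in H_\gamma$ in the dense set of classes of sequences $(\xi_n)_{n\in\mathbb N}\in\widetilde H^{(k)}_\gamma$ for some $k\in\mathbb N$, i.e. with $|\supp\xi_n|\le k\gamma_n$ for all $n$.

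\emph{Key estimate.} Let $P_{n-1}$ denote the orthogonal projection of $l^2(X_n)$ onto the $G$-invariant subspace $U_{n-1}(l^2(X_{n-1}))$, which consists of the functions on $X_n$ that are constant on the fibres of $q_n\colon X_n\to X_{n-1}$; each such fibre has $\nu_n/\nu_{n-1}=\gamma'_n$ elements, and the normalised fibre indicators form an orthonormal basis of this subspace, so
\[
\|P_{n-1}\xi_n\|^2=\frac{\nu_{n-1}}{\nu_n}\sum_{y\in X_{n-1}}\Bigl|\sum_{x\in q_n^{-1}(y)}\xi_n(x)\Bigr|^2 .
\]
Applying the Cauchy--Schwarz inequality inside each fibre (only the support of $\xi_n$ contributes there) and then summing over $y$,
\[
\|P_{n-1}\xi_n\|^2\le\frac{\nu_{n-1}}{\nu_n}\,\bigl(\max_{y}|\supp\xi_n\cap q_n^{-1}(y)|\bigr)\,\|\xi_n\|^2\le\frac{|\supp\xi_n|}{\nu_n/\nu_{n-1}}\,\|\xi_n\|^2 .
\]
For $(\xi_n)\in\widetilde H^{(k)}_\gamma$ this yields $\|P_{n-1}\xi_n\|^2\le k\,\frac{\gamma_n}{\gamma'_n}\,\|\xi_n\|^2$, and since $\gamma\prec\gamma'$ we get $\lim_{\mathcal U}\|P_{n-1}\xi_n\|=0$; that is, $(P_{n-1}\xi_n)_{n\in\mathbb N}\in\widetilde H_0$.

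\emph{Conclusion.} Set $\eta_n=\xi_n-P_{n-1}\xi_n\in l^2(X_n)\ominus l^2(X_{n-1})$, so that $\lambda_n$ acts on $\eta_n$ exactly as $\rho_n$. Since the subspace is $G$-invariant, $P_{n-1}$ commutes with every $\lambda_n(g)$, hence with $\lambda_n(a)$, and because $\sup_n\|\lambda_n(a)\|<\infty$ we also have $(\lambda_n(a)\xi_n)-(\lambda_n(a)\eta_n)=(\lambda_n(a)P_{n-1}\xi_n)\in\widetilde H_0$. Therefore, computing in $\widehat H$,
\[
\|\pi_\gamma(a)\xi\|^2=\lim_{\mathcal U}\|\lambda_n(a)\eta_n\|^2=\lim_{\mathcal U}\|\rho_n(a)\eta_n\|^2\le\lim_{\mathcal U}\|\rho_n(a)\|^2\,\|\eta_n\|^2 .
\]
Using $\|\eta_n\|\le\|\xi_n\|$, the fact that $\mathcal U$ is non-principal (so that for each $\varepsilon>0$ the inequality $\|\rho_n(a)\|\le\limsup_{m\to\infty}\|\rho_m(a)\|+\varepsilon$ holds on a set in $\mathcal U$), and $\lim_{\mathcal U}\|\xi_n\|^2=\|\xi\|^2$, we obtain $\|\pi_\gamma(a)\xi\|^2\le\bigl(\limsup_{n\to\infty}\|\rho_n(a)\|\bigr)^2\|\xi\|^2$. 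As such $\xi$ are dense in $H_\gamma$, this gives $\|\pi_\gamma(a)\|\le\limsup_{n\to\infty}\|\rho_n(a)\|$.

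\emph{On the main difficulty.} There is no single hard step; the one idea driving the argument is the estimate above, which says that a support that is sparse relative to the fibre size $\nu_n/\nu_{n-1}$ becomes, in the ultraproduct, orthogonal to the ``coarse'' subspace $U_{n-1}(l^2(X_{n-1}))$ — and this is precisely where the hypothesis $\gamma\prec\gamma'$ is used. The remaining points requiring care are commuting $\lambda_n(a)$ past $P_{n-1}$ (legitimate since $\lambda_{n-1}$ is a subrepresentation of $\lambda_n$) and passing from the $\lim_{\mathcal U}$-bound on $\|\rho_n(a)\|$ to the $\limsup$, which uses non-principality of $\mathcal U$.
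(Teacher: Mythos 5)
Your proof is correct, and while it shares its key estimate with the paper, the surrounding architecture is genuinely different. The heart of both arguments is identical: the Cauchy--Schwarz bound on fibres of $q_n\colon X_n\to X_{n-1}$ giving $\|P_{n-1}\xi_n\|^2\le k\frac{\gamma_n}{\gamma'_n}\|\xi_n\|^2$ for $\xi\in\widetilde H^{(k)}_\gamma$, so that the hypothesis $\gamma\prec\gamma'$ forces $(P_{n-1}\xi_n)\in\widetilde H_0$. Where you diverge is in how you convert this into the norm bound. The paper builds an intermediate Hilbert space $L$ (the ultraproduct of the spaces $l^2(X_n)\ominus l^2(X_{n-1})$), shows $H_\gamma\subset L$ via the estimate above, and then identifies the norm of $\widehat\lambda|_L$ \emph{exactly} as $\limsup_n\|\rho_n(a)\|$ by invoking Calkin's Theorem 5.3 together with a multiplicity argument (each irreducible constituent $\sigma_k$ occurs in exactly one $\rho_n$, since its multiplicity in $\lambda_{n-1}$ and $\lambda_n$ is the same). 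You bypass all of that: you simply replace $\xi_n$ by $\eta_n=\xi_n-P_{n-1}\xi_n$ modulo $\widetilde H_0$ (legitimately, since $P_{n-1}$ commutes with $\lambda_n(a)$ and the norms $\|\lambda_n(a)\|$ are uniformly bounded), apply the trivial bound $\|\rho_n(a)\eta_n\|\le\|\rho_n(a)\|\,\|\eta_n\|$, and use non-principality of $\mathcal U$ to pass from $\lim_{\mathcal U}$ to $\limsup$. Your route is shorter and more elementary, needing neither Calkin's theorem nor the decomposition into irreducibles; the paper's route costs more but yields additional information, namely a concrete representation $\rho$ containing $\pi_\gamma$ whose norm equals $\limsup_n\|\rho_n(a)\|$ on the nose, not just as an upper bound.
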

\begin{proof}
We are going to construct a Hilbert space $L$ such that $H_\gamma\subset L\subset H_\nu=\widehat H$ and a representation $\rho$ of $G$ on $L$ such that $\rho=\widehat\lambda|_L$ and $\|\rho(a)\|=\lim\sup_{n\to\infty}\|\rho_n(a)\|$ for any $a\in\mathbb C[G]$. This would obviously imply that $\rho$ contains $\pi_\gamma$, hence the claim of the theorem. 

In order to construct $L$ let us consider the set $\widetilde L$ of all the sequences $(\xi_n)_{n\in\mathbb N}$ such that $\xi_n\in l^2(X_n)\ominus l^2(X_{n-1})$ and the norms $\|\xi_n\|$ are uniformly bounded, with the degenerate inner product as before, which becomes positive definite after taking quotient modulo sequences with $\lim_{\mathcal U}\|\xi_n\|^2=0$. Then $\widetilde L\subset\widetilde H$, and we define $L$ as the closure of $\widetilde L+\widetilde H_0/\widetilde H_0$ in $\widehat H$. Then $\rho(g)(\xi_n)=(\rho_n(g)\xi_n)$, $g\in G$, defines the representation $\rho$ on $L$.   

Let $\sigma_k$, $k\in\mathbb N$, be the sequence of irreducible representations of $G$ that appear as direct summands in $\lambda_1,\lambda_2,\ldots$. Let $\sigma_k$ be a subrepresentation of $\lambda_{n-1}$. As the multiplicity of $\sigma_k$ in $\lambda_{n-1}$ and in $\lambda_n$ is the same and equals its dimension, it is not contained in $\rho_n$. Thus, each $\sigma_k$ appears only in one of $\rho_1,\rho_2,\ldots$. Let $\mathcal B$ be the $C^*$-algebra generated by all $\oplus_{n\in\mathbb N}\rho_n(g)$, $g\in G$, in $H'=\oplus_{n\in\mathbb N}l^2(X_n)\ominus l^2(X_{n-1})$. Then $\|\rho(a)\|$ equals the norm of $a$ in $\mathcal B/\mathcal B\cap\mathbb K(H')$ by Theorem 5.3 of \cite{Calkin}. Since each $\sigma_k$ appears only in one of $\rho_1,\rho_2,\ldots$, the latter norm equals $\lim\sup_{k\to\infty}\|\sigma_k(a)\|=\lim\sup_{n\to\infty}\|\rho_n(a)\|$.

To finish the argument, it remains to show that $H_\gamma\subset L$. Let $P_n:l^2(X_n)\to l^2(X_n)\ominus l^2(X_{n-1})$ be the orthogonal projection, $Q_n=1-P_n$. We claim that $\lim_{\mathcal U}\|\xi_n-P_n\xi_n\|=0$ for any $\xi\in H_\gamma$. Let $\xi\in\widetilde H^{(k)}_\gamma$ for some $k\in\mathbb N$. 

Let $x\in X_{n-1}=G/G_{n-1}$, $\delta_x$ its characteristic delta-function, $\eta_x=U_{n-1}\delta_x\in l^2(X_n)$. Then $Q_n\xi_n=\sum_{x\in X_{n-1}}\langle\eta_x,\xi_n\rangle\eta_x$. Note that $\eta_x(y)=\left\lbrace\begin{array}{cl}\frac{1}{\sqrt{\nu_n/\nu_{n-1}}}&\mbox{\ if\ }q_n(y)=x;\\0&\mbox{\ if\ }q_n(y)\neq x.\end{array}\right.$

Note also that $\supp\eta_x\cap\supp\eta_y=\emptyset$ when $x\neq y$. Let $A_x=\supp\xi_n\cap\supp\eta_x\subset X_n$. Then $\supp\xi_n=\sqcup_{x\in X_{n-1}}A_x$, hence $\sum_{x\in X_{n-1}}|A_x|\leq k\gamma_n$.
\begin{eqnarray*}
\|Q_n\xi_n\|^2&=&\sum_{x\in X_{n-1}}|\langle\eta_x,\xi_n\rangle|^2\leq \sum_{x\in X_{n-1}}\sum_{y\in A_x}|\eta_x(y)|^2\sum_{y\in A_x}|\xi_n(y)|^2\\
&\leq& \sum_{x\in X_{n-1}}|A_x|\frac{1}{\nu_n/\nu_{n-1}}\|\xi_n\|^2\leq k\gamma_n\frac{\nu_{n-1}}{\nu_n}\|\xi_n\|^2.
\end{eqnarray*}

So, $\lim_{\mathcal U}\|Q_n\xi_n\|=0$, hence $\xi\in \widetilde L$.

\end{proof}

\section{Case of intermediate growth}

Here we consider the case of intermediate growth and show two different kinds of behaviour of representations $\pi_\gamma$ --- for property (T) groups and for free groups.

\subsection{Case of property $(\tau)$ groups}

Recall that the property $(\tau)$ is a generalization of the property (T) of Kazhdan and means that the trivial representation is isolated among finitedimensional representations. For more details about property $(\tau)$ we refer to \cite{Lubotzky}.

\begin{thm}
Let $G$ be a finitely generated property $(\tau)$ group, and let $\gamma\prec \nu$. Then the trivial representation is not weakly contained in $\pi_\gamma$.  

\end{thm}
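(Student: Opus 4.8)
The plan is to show that any vector $\xi \in H_\gamma$ which is ``almost invariant'' for $\pi_\gamma$ must in fact lie (up to $\widetilde H_0$) in the span of the constant functions, and then to use $\gamma \prec \nu$ to show those constant functions are \emph{not} in $H_\gamma$ unless the vector is zero. More precisely, suppose for contradiction that the trivial representation is weakly contained in $\pi_\gamma$, so there is a unit vector $\xi = (\xi_n)_{n\in\mathbb N} \in H_\gamma$, representable (after approximation) by some element of $\widetilde H^{(k)}_\gamma$, with $\|\pi_\gamma(s)\xi - \xi\| < \delta$ for all $s \in S$, where $\delta$ is small. Write $\xi_n = c_n + \xi_n^\perp$ where $c_n \in l^2(X_n)$ is the constant function (the projection onto the $\lambda_n$-invariant subspace, i.e.\ the image of $\lambda_0$) and $\xi_n^\perp \perp c_n$. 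Since $\lambda_n$ restricted to the orthogonal complement of the constants is exactly the representation that gives property $(\tau)$ a spectral gap, there is $\varepsilon_0 > 0$ (independent of $n$) with $\max_{s\in S}\|\lambda_n(s)\xi_n^\perp - \xi_n^\perp\| \geq \varepsilon_0 \|\xi_n^\perp\|$. Taking $\lim_{\mathcal U}$ of $\|\pi_\gamma(s)\xi-\xi\|^2 = \lim_{\mathcal U}\|\lambda_n(s)\xi_n^\perp - \xi_n^\perp\|^2$ and choosing $\delta < \varepsilon_0$, we conclude $\lim_{\mathcal U}\|\xi_n^\perp\| = 0$, i.e.\ the class of $\xi$ in $\widehat H$ equals the class of $(c_n)$.

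Now I would analyze the constant sequence $(c_n)$. Since $\|\xi\| = 1$ and $\xi \equiv (c_n)$ in $\widehat H$, we have $\lim_{\mathcal U}\|c_n\| = 1$; write $c_n = \beta_n \chi_{X_n}$ with $|\beta_n|^2 \nu_n \to 1$ along $\mathcal U$, so $\beta_n \sim \nu_n^{-1/2}$. The key point is that $\xi \in H_\gamma$ forces $(c_n)$ to be approximable by vectors with small support. Fix $\varepsilon \in (0,1)$: there is $k' \in \mathbb N$ and $\eta \in \widetilde H^{(k')}_\gamma$ with $\|(c_n) - \eta\| < \varepsilon$, and $|\supp \eta_n| \leq k'\gamma_n$. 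Running exactly the support-counting estimate from Lemma~2.1 with $E'_n = X_n$, $\gamma'_n = \nu_n$ in place of an arbitrary $E'_n$ of size $\gamma'_n$: on $X_n \setminus \supp\eta_n$ the function $c_n$ has constant modulus $|\beta_n|$, so
\[
\|c_n - \eta_n\|^2 \;\geq\; \sum_{x \in X_n \setminus \supp\eta_n} |\beta_n|^2 \;=\; |\beta_n|^2\,(\nu_n - |\supp\eta_n|) \;\geq\; |\beta_n|^2(\nu_n - k'\gamma_n).
\]
Taking $\lim_{\mathcal U}$ gives $\varepsilon^2 \geq \lim_{\mathcal U} |\beta_n|^2 \nu_n \cdot (1 - k'\gamma_n/\nu_n) = 1 - k'\lim_{\mathcal U}(\gamma_n/\nu_n) = 1$, since $\gamma \prec \nu$ means $\lim_{\mathcal U}\gamma_n/\nu_n = 0$. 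This contradicts $\varepsilon < 1$, so no such $\xi$ exists.

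The main obstacle — and the place requiring the most care — is the first step: extracting a \emph{uniform} spectral gap $\varepsilon_0$ that applies simultaneously to all the $\xi_n^\perp$ and survives the passage to the ultralimit. Property $(\tau)$ with respect to the family $\{G_n\}$ gives precisely that the quasi-regular representations $\lambda_n$ on $l^2(X_n) \ominus \mathbb C$ have Kazhdan constants bounded below uniformly in $n$ (this is one of the standard equivalent formulations; see \cite{Lubotzky}), so this is available, but one must be careful that the ``almost invariant vector'' $\xi$ is genuinely represented by a bounded sequence $(\xi_n)$ — which it is, since elements of $\widetilde H^{(k)}_\gamma$ have $\sup_n\|\xi_n\| < \infty$ — and that the decomposition $\xi_n = c_n + \xi_n^\perp$ is compatible with the $\mathcal U$-limit inner product, which holds because the projections onto constants are $G$-equivariant and norm-one. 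A secondary subtlety is that weak containment of the trivial representation only gives almost-invariant vectors in $H_\gamma$, which a priori are limits of elements of the $\widetilde H^{(k)}_\gamma$; one handles this by first approximating $\xi$ within $\varepsilon/10$ by an honest element of some $\widetilde H^{(k)}_\gamma$ and absorbing the error into the constants $\delta$ and $\varepsilon$ above.
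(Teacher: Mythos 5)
Your argument is essentially the paper's: the same uniform spectral gap from property $(\tau)$ forces the almost-invariant vector to be close to the normalized constant function $\nu_n^{-1/2}\chi_{X_n}$, and the same support count (the paper bounds $|\langle\xi_n,\xi_n^0\rangle|\le\sqrt{k\gamma_n/\nu_n}$ directly by Cauchy--Schwarz, while you rerun the Lemma 2.1 estimate with $E'_n=X_n$) gives the contradiction from $\gamma\prec\nu$. One small slip: the spectral gap only yields $\lim_{\mathcal U}\|\xi_n^\perp\|\le\delta/\varepsilon_0$ rather than $0$, but this fixed small error is absorbed into your final $\varepsilon$-bookkeeping exactly as in the paper's estimate $\|\xi^{(k)}_n-\xi^0_n\|<\sqrt{2\varepsilon/\delta}$.
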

\begin{proof}
Let $S\subset G$ be a finite symmetric generating set, and let $x=\frac{1}{|S|}\sum_{g\in S}g\in\mathbb C[G]$.
Suppose the contrary: for any $\varepsilon>0$ there exists $\xi^{(k)}\in\widetilde H_\gamma$ such that $\|\xi^{(k)}\|=1$ and $\|\pi_\gamma(x)\xi^{(k)}-\xi^{(k)}\|<\varepsilon$. Without loss of generality we may assume that $\xi^{(k)}\in\widetilde H^{(k)}_\gamma$, and that $\|\xi_n\|=1$ for each $n\in\mathbb N$, where $\xi=(\xi_n)_{n\in\mathbb N}$. A fixed $\varepsilon$ determines $k$ such that 
$$
\|\pi_\gamma(x)\xi^{(k)}-\xi^{(k)}\|=\lim_{\mathcal U}\|\lambda_n(x)\xi^{(k)}_n-\xi^{(k)}_n\|<\varepsilon.
$$
Then there exists $\mathbb A\in\mathcal U$ such that
\begin{equation}\label{eqq1}
\|\lambda_n(x)\xi^{(k)}_n-\xi^{(k)}_n\|<\varepsilon
\end{equation}
for any $n\in\mathbb A$.

Note that each $\lambda_n$ contains exactly one copy of the trivial representation, with the representation space spanned by the single vector $\xi^0_n=\frac{1}{\sqrt{|X_n|}}\chi_{X_n}$. By property $(\tau)$, there exists $\delta>0$ (which does not depend on $n$) such that $\|\lambda_n(x)\eta_n-\eta_n\|\geq \delta\|\eta_n\|$ for any $\eta_n\in l^2(X_n)$ orthogonal to $\xi^0_n$. Let $\xi^{(k)}_n=\alpha\xi^0_n+\eta_n$, where $\eta_n\perp \xi^0_n$. Then 
$$ 
\delta\|\eta_n\|\leq \|\lambda_n(x)\eta_n-\eta_n\|
=\|\lambda_n(x)\xi^{(k)}_n-\xi^{(k)}_n\|<\varepsilon,
$$
for any $n\in\mathbb A$, hence $\|\eta_n\|<\frac{\varepsilon}{\delta}$ when $n\in\mathbb A$. Then $|\alpha|=\sqrt{1-\|\eta_n\|^2}>\sqrt{1-\frac{\varepsilon^2}{\delta^2}}$, and 
\begin{equation}\label{estimate1}
\|\xi^{(k)}_n-\xi^0_n\|=\sqrt{(1-\alpha)^2+\|\eta_n\|^2}<\sqrt{2\frac{\varepsilon}{\delta}}
\end{equation}
when $n\in\mathbb A$ and $\varepsilon\leq\delta$.

But $|\supp\xi^{(k)}_n|\leq k\gamma_n$ for any $n\in\mathbb N$, hence $|\langle\xi^{(k)}_n,\xi^0_n\rangle|\leq\sqrt{\frac{k\gamma_n}{|X_n|}}$. So, by assumption, $\lim_{n\to\infty}\langle\xi^{(k)}_n,\xi^0_n\rangle=0$, which contradicts (\ref{estimate1}) when $\varepsilon<\delta/4$.

\end{proof}

\subsection{Case of free groups}

Let $G=\mathbb F_2$ be the free group on two generators. Here we show that $H_\gamma$ may weakly contain
the trivial representation for a certain sequence of finite index subgroups and for certain intermediate $\gamma$. We follow here \cite{Arzhantseva-Guentner}.

Let $G_0=\mathbb F_2$, $G_1=\mathbb F_2^{(2)}$, $G_{n+1}=G_n^{(2)}$, be the iterated subgroups generated by the squares of all elements of the previous group. There is a nice description of $X_n=\mathbb F_2/G_n$ in \cite{Arzhantseva-Guentner} as vertices of Cayley graphs of $X_n$. The Cayley graph of $\mathbb F_2/G_0$ is the wedge of two circles with a single vertex, and the Cayley graphs $Cay(X_n)$ of $\mathbb F_2/G_n=X_n$ are constructed from it inductively. Let $V_n$ and $E_n$ denote the set of vertices and edges of the Cayley graph of $X_n$, let $T_n$ be a maximal tree in $Cay(X_n)$, and let $e_1,\ldots,e_{r_n}$ be the edges not in $T_n$. Then $V_{n+1}=V_n\times(\mathbb Z/2)^{r_n}$, $E_{n+1}=E_n\times(\mathbb Z/2)^{r_n}$. Let $e\in E_n$, $\alpha\in(\mathbb Z/2)^{r_n}$. Let $e$ connect the vertices $v,w\in V_n$. If $e\in T_n$ then the edge $(e,\alpha)\in E_{n+1}$ connects $(v,\alpha)$ with $(w,\alpha)$. If $e=e_i$, $1\leq i\leq r_n$, then $(e,\alpha)$ connects $(v,\alpha)$ with $(w,\alpha+\bar e_i)$, where $\bar e_i=(0,\ldots,0,1,0,\ldots,0)\in(\mathbb Z/2)^{r_n}$ has 1 as its $i$'s component.

For a Cayley graph $Cay(X_n)$ and for a subset $A$ of vertices of $Cay(X_n)$ we denote by $|\partial A|$ the number of edges in $Cay(X_n)$ such that they connect a point from $A$ with a point from $X_n\setminus A$. 

\begin{lem}\label{lem-free}
There exists a sequence $A_n\subset X_n$ such that 
\begin{equation}\label{l1}
\lim_{n\to\infty}|A_n|=\infty; 
\end{equation}
\begin{equation}\label{l3}
\lim_{n\to\infty}\frac{|A_n|}{|X_n|}=0;
\end{equation}
\begin{equation}\label{l2}
\lim_{n\to\infty}\frac{|\partial A_n|}{|A_n|}=0. 
\end{equation}

\end{lem}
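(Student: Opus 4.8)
The plan is to build the sets $A_n$ explicitly from the inductive description of the Cayley graphs $Cay(X_n)$, exploiting the fact that the "vertical" coordinate group $(\mathbb Z/2)^{r_n}$ is duplicated at each step while only the untwisted edges pass through directly. First I would fix a small ball $B \subset V_0$ in the base Cayley graph $Cay(X_0) = Cay(\mathbb F_2/G_0)$ — or more conveniently, a subset of $V_m$ for some fixed $m$ to be chosen later — and then \emph{propagate it trivially up the tower}: given $A_n \subset V_n$, set $A_{n+1} = A_n \times \{0\} \subset V_n \times (\mathbb Z/2)^{r_n} = V_{n+1}$. With this choice $|A_{n+1}| = |A_n|$, so the cardinalities stabilize, which immediately gives \eqref{l3} since $|X_n| = |X_0|\cdot\prod_{j<n} 2^{r_j} \to \infty$, but it fails \eqref{l1}. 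So a single fixed seed is not enough; instead I would let the seed grow slowly with $n$: choose an increasing sequence $m_n \to \infty$ and a set $A^{(0)}_n \subset V_{m_n}$ of size tending to infinity but slowly, and then propagate $A^{(0)}_n$ up from level $m_n$ to level $n$ by the trivial embedding $v \mapsto (v, 0, \ldots, 0)$. This keeps $|A_n| = |A^{(0)}_n| \to \infty$ (giving \eqref{l1}) while $|A_n|/|X_n| \to 0$ even more strongly (giving \eqref{l3}).

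The main work is the isoperimetric estimate \eqref{l2}. The key observation is that when we pass from level $j$ to level $j+1$ by $A \mapsto A \times \{0\}$, the boundary $\partial(A\times\{0\})$ in $Cay(X_{j+1})$ consists of: (i) edges $(e,0)$ with $e \in \partial A$ and $e \in T_j$, contributing at most $|\partial A|$; (ii) edges $(e,0)$ with $e \in \partial A$, $e = e_i \notin T_j$, again at most $|\partial A|$; and (iii) the twisted edges $(e_i, 0)$ with $e_i$ \emph{internal} to $A$ (both endpoints in $A$) but $e_i \notin T_j$ — these now connect $(v,0)$ to $(w, \bar e_i) \notin A \times \{0\}$, so each such edge joins $A\times\{0\}$ to its complement. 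Thus $|\partial(A\times\{0\})| \le 2|\partial A| + (\text{number of non-tree edges with both endpoints in } A)$. This third term is the obstacle: the density of non-tree edges in $X_j$ is roughly $r_j/|V_j|$ per vertex, and $r_j$ grows (it is of order $|V_j|$ since $\mathbb F_2$ has rank $2$, so $r_j = |E_j| - |V_j| + 1 = |V_j| + 1$). So propagating even a \emph{single} step can roughly double the ratio $|\partial A|/|A|$ if $A$ contains many non-tree edges. The fix is to choose the maximal tree $T_j$ and the seed sets so that the seed, viewed inside $Cay(X_{m_n})$, is contained in (a neighborhood of) the tree $T_{m_n}$ and is \emph{connected within the tree}, so that it contains essentially no non-tree edges; then each propagation step costs only a bounded factor times the genuine graph boundary, and with a sufficiently slowly growing seed the accumulated loss is controlled.

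Concretely I would argue as follows. Work with $A^{(0)}_n$ a combinatorial ball of radius $s_n$ around a fixed vertex inside the tree $T_{m_n}$ of $Cay(X_{m_n})$, where $s_n \to \infty$ slowly and $m_n$ is chosen large relative to $s_n$ so that this tree-ball is a genuine tree (no cycles), whence it has $|A^{(0)}_n| \sim$ (size growing with $s_n$) vertices and \emph{no} non-tree edges of $Cay(X_{m_n})$ inside it, and boundary $|\partial A^{(0)}_n|$ at most a constant times $|A^{(0)}_n|$ — but actually, to beat \eqref{l2} we need the ratio to tend to $0$, so a ball is still too crude (a ball in a tree has boundary comparable to its size). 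Here I would instead invoke the Følner-type / amenability-free argument of \cite{Arzhantseva-Guentner}: the graphs $Cay(X_n)$ form a sequence of expanders is \emph{false} in this example — that is precisely the point of the Arzhantseva--Guentner construction, where these graphs are \emph{not} expanders — so each individual $Cay(X_n)$, being a finite graph, has subsets $A$ with $|\partial A|/|A|$ as small as we like, say $\le 1/n$, and with $|A| \ge f(n)$ for some $f(n)\to\infty$ and $|A| \le |X_n|/n$; one then takes a diagonal choice. The hard part, and the place where I expect the real argument of the paper to diverge from this sketch, is ensuring all three conditions \eqref{l1}, \eqref{l3}, \eqref{l2} \emph{simultaneously} with quantitative control — i.e., that the sets witnessing small Cheeger ratio can be taken of cardinality both unbounded and of vanishing density. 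I would extract this from the explicit recursive structure: take $A_n$ to be a sub-block $V_k \times \{\alpha\}$ for a well-chosen intermediate level $k = k(n)$ and coset $\alpha$, whose boundary consists only of the $r_k$ twisted edges leaving that block, giving $|\partial A_n|/|A_n| \le r_k/|V_k| = O(1)$, and then iterate/refine within that block using smaller sub-blocks to drive the ratio to zero while keeping $k(n) \to \infty$ (for \eqref{l1}) and $k(n)$ small compared to $n$ (for \eqref{l3}). Balancing these three growth rates against one another is the crux.
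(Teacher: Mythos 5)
Your proposal circles the right objects (the product structure $V_{n+1}=V_n\times(\mathbb Z/2)^{r_n}$, the distinction between tree and non-tree edges, blocks of the form $V_k\times\{\cdot\}$) but it does not close the argument, and the one concrete construction you end with cannot work as stated. A single fiber $A=V_n\times\{\alpha\}$ has boundary exactly the twisted edges in all $r_n$ non-tree directions, so $|\partial A|/|A|=2r_n/|V_n|=O(1)$ (indeed $r_n=|V_n|+1$, as you compute), and your proposed remedy --- ``iterate/refine within that block using smaller sub-blocks'' --- goes in the wrong direction: in these $4$-regular connected graphs shrinking a set can only worsen the boundary-to-volume ratio (already $|\partial A|/|A|\geq 1/|A|$), and you give no mechanism by which the ratio would tend to $0$. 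Likewise, propagating a small seed up the tower by $A\mapsto A\times\{0\}$ is doomed for the reason you yourself identify: internal non-tree edges become boundary edges at every step.

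The missing idea is to take the fiber over a large \emph{coordinate subgroup} rather than over a point: the paper sets $A_{n+1}=\{(v,\alpha):v\in V_n,\ \alpha_i=0\ \text{for}\ k_n<i\leq r_n\}$, i.e.\ $V_n\times(\mathbb Z/2)^{k_n}\times\{0\}^{r_n-k_n}$. Then every non-tree edge $e_i$ with $i\leq k_n$ is \emph{internal} to $A_{n+1}$ (since $\alpha\mapsto\alpha+\bar e_i$ preserves the defining condition), all tree edges are internal or external in pairs, and only the $r_n-k_n$ remaining twisted directions contribute, giving $|\partial A_{n+1}|/|A_{n+1}|=2(r_n-k_n)/|X_n|$. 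Choosing $k_n$ so that $r_n-k_n\to\infty$ but $(r_n-k_n)/|X_n|\to 0$ (possible since $r_n$ and $|X_n|$ grow super-exponentially) yields \eqref{l2} and \eqref{l3} simultaneously, while $|A_{n+1}|=|X_n|2^{k_n}$ gives \eqref{l1}. Your fallback of citing non-expansion from Arzhantseva--Guentner is essentially circular here --- the existence of F\o lner-type sets is the content of the lemma --- and even taken as a black box it only yields $|A_n|\leq|X_n|/2$, not the density condition \eqref{l3}, a gap you flag but do not close.
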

\begin{proof}
Let $1<k_n<r_n$. Set 
$$
A_{n+1}=\{(v,\alpha):v\in V_n,\alpha\in(\mathbb Z/2)^{r_n}, \alpha_i=0\mbox{\ for\ }k_n<i\leq r_n\}. 
$$
Then $|A_{n+1}|=|X_n|2^{k_n}$, and (\ref{l1}) holds for any $k_n\geq 1$. Note that (\ref{l3}) means that 
$$
\lim_{n\to\infty}\frac{|A_{n+1}|}{|X_{n+1}|}=\lim_{n\to\infty}2^{k_n-r_n}=0, 
$$
which is equivalent to $\lim_{n\to\infty}(r_n-k_n)=\infty$.  

Let us evaluate $|\partial A_{n+1}|$. If $e\in T_n$ then both ends of $(e,\alpha)$ are either in $A_{n+1}$ or in $V_{n+1}\setminus A_{n+1}$, so let $e=e_i$ be one of $e_1,\ldots,e_{r_n}$. If $(e,\alpha)\in\partial A_{n+1}$ then $i>k_n$, so $|\partial A_{n+1}|=2(r_n-k_n)2^{k_n}$.

Thus, $\frac{|\partial A_{n+1}|}{|A_{n+1}|}=\frac{2(r_n-k_n)}{|X_n|}$. As both $(r_n)_{n\in\mathbb N}$ and $(|X_n|)_{n\in\mathbb N}$ grow faster than an iterated exponential, so one can easily find a sequence $(k_n)_{n\in\mathbb N}$ such that
\begin{itemize}
\item
$\lim_{n\to\infty}(r_n-k_n)=\infty;$ 
\item
$\lim_{n\to\infty}\frac{(r_n-k_n)}{|X_n|}=0$ 
\end{itemize}
(the latter implies (\ref{l2})).    

\end{proof}

\begin{thm}
There exists $\gamma$ with $\lim_{n\to\infty}\gamma_n=\infty$ and $\lim_{n\to\infty}\frac{\gamma_n}{d_n}=0$ such that $\pi_\gamma$ weakly contains the trivial representation.

\end{thm}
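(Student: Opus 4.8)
The plan is to write down, explicitly, a genuine $\pi_\gamma$-invariant unit vector in $H_\gamma$; this of course forces the trivial representation to be weakly contained in $\pi_\gamma$. First I take the sets $A_n\subset X_n$ furnished by Lemma \ref{lem-free} and set $\gamma_n=|A_n|$. In the construction used to prove Lemma \ref{lem-free} one has $|A_{n+1}|=\nu_n2^{k_n}\geq 2|A_n|$, so $(\gamma_n)$ is non-decreasing, and trivially $1\leq\gamma_n=|A_n|\leq|X_n|=\nu_n$; hence $\gamma\in Z$. By (\ref{l1}) we get $\lim_{n\to\infty}\gamma_n=\infty$, and the requirement $\lim_{n\to\infty}\gamma_n/d_n=0$ is secured by choosing the parameters $k_n$ of Lemma \ref{lem-free} appropriately --- there is ample room, since $(r_n)$ and $(\nu_n)$ grow faster than any iterated exponential, so one can keep (\ref{l1})--(\ref{l2}) while making $\gamma$ grow as slowly as needed.

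Next I would put $\xi=(\xi_n)_{n\in\mathbb N}$ with $\xi_n=\tfrac{1}{\sqrt{|A_n|}}\chi_{A_n}\in l^2(X_n)$ (for the finitely many $n$ where $A_n$ is empty or undefined, choose $\xi_n$ arbitrarily with $\|\xi_n\|=1$ and $|\supp\xi_n|\leq\gamma_n$; the non-principal ultrafilter does not see these terms). Since $|\supp\xi_n|=|A_n|=\gamma_n$, the sequence $\xi$ lies in $\widetilde H^{(1)}_\gamma$, so its class belongs to $H_\gamma$; moreover $\langle\xi,\xi\rangle=\lim_{\mathcal U}\|\xi_n\|^2=1$, so $\xi\notin\widetilde H_0$ and the class of $\xi$ is a unit vector. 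For a generator $s\in S$ of $G=\mathbb F_2$ we have $\lambda_n(s)\xi_n=\tfrac{1}{\sqrt{|A_n|}}\chi_{sA_n}$, whence
\begin{equation*}
\|\lambda_n(s)\xi_n-\xi_n\|^2=\frac{|sA_n\triangle A_n|}{|A_n|}\leq\frac{2|\partial A_n|}{|A_n|},
\end{equation*}
the inequality being the usual one: every point of $sA_n\triangle A_n$ sits on an edge of $Cay(X_n)$ joining $A_n$ to its complement, and at most two points sit on the same edge. By (\ref{l2}) the right-hand side tends to $0$, hence $\lim_{\mathcal U}\|\lambda_n(s)\xi_n-\xi_n\|=0$, i.e. $\pi_\gamma(s)\xi=\xi$ in $\widehat H$. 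Since $S$ generates $G$, the class of $\xi$ is $\pi_\gamma$-invariant, and therefore the trivial representation is contained --- a fortiori weakly contained --- in $\pi_\gamma$.

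Apart from this, everything is a short direct computation. The one delicate point is the bookkeeping in the first paragraph: one must verify that the parameters in the construction of the $A_n$ can be tuned so that $\gamma_n=|A_n|$ is simultaneously a bona fide growth function (non-decreasing, between $1$ and $\nu_n$), tends to infinity, satisfies $\gamma_n/d_n\to0$, and still obeys the F\o lner estimate (\ref{l2}). The doubly-exponential gap between $\nu_n$ and $\log\nu_n$ is what makes all four compatible, but this is the step I would be most careful about writing out.
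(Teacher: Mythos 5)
Your proposal is correct and follows essentially the same route as the paper: take the sets $A_n$ from Lemma \ref{lem-free}, set $\gamma_n=|A_n|$, $\xi_n=\frac{1}{\sqrt{|A_n|}}\chi_{A_n}$, and bound $\|\lambda_n(s)\xi_n-\xi_n\|^2$ by $2|\partial A_n|/|A_n|$. Your added observation that the ultrafilter limit being zero makes the class of $\xi$ genuinely invariant (not merely almost invariant) is a harmless sharpening, and your bookkeeping on the admissibility of $\gamma$ is a reasonable supplement to what the paper leaves implicit.
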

\begin{proof}
Let $A_n\subset X_n$ be as in Lemma \ref{lem-free}.
Set $\gamma_n=|A_n|$, and $\xi_n=\frac{1}{\sqrt{\gamma_n}}\chi_{A_n}$. Then $\xi=(\xi_n)_{n\in\mathbb N}\in \widetilde H_\gamma$. If $g\in S\subset G$ is one of the generators then 
\begin{eqnarray*}
\|\lambda_n(g)\xi_n-\xi_n\|^2&\leq&\frac{1}{\gamma_n}(|g(A_n)\setminus A_n|+|A_n\setminus g(A_n)|)\\
&=&\frac{1}{\gamma_n}(|g(A_n)\setminus A_n|+|g^{-1}(A_n)\setminus A_n|)\leq\frac{2|\partial A_n|}{\gamma_n}, 
\end{eqnarray*}
hence $\xi$ is almost invariant for $\pi_\gamma$ by Lemma \ref{lem-free}.  

\end{proof}

\end{document}